\newcommand{\N}{\mathbb{N}}
\newtheorem{theorem}{Theorem}[section]
\newtheorem{lemma}[theorem]{Lemma}
\newtheorem{prop}[theorem]{Proposition}
\theoremstyle{definition}
\newtheorem{remark}[theorem]{Remark}
\let\orgdescriptionlabel\descriptionlabel
\renewcommand*{\descriptionlabel}[1]{%
  \let\orglabel\label
  \let\label\@gobble
  \phantomsection
  \edef\@currentlabel{#1}%
  \let\label\orglabel
  \orgdescriptionlabel{#1}%
}
\renewcommand{\P}{\mathbb{P}}
\newcommand{\1}{\mathbbm{1}}
\newcommand{\E}{\mathbb{E}}
\newcommand{\Z}{\mathbb{Z}}
\title{Strict monotonicity of critical points in independent long-range percolation models}
\author{
Stein Andreas Bethuelsen \thanks{University of Bergen, All\'egaten 41,
5020 Bergen, Norway} \\ stein.bethuelsen@uib.no
\and
Christian M\"{o}nch \orcidlink{0000-0002-6531-6482}
\\ cmoench25@gmail.com
}
\date{\today}
\begin{document}
\maketitle

\begin{spacing}{0.9}
\begin{abstract}
\noindent
We consider independent long-range percolation models on locally finite vertex-transitive graphs. Using coupling ideas we prove strict monotonicity of the critical points with respect to local perturbations in the connection function, thereby improving upon previous results obtained via the classical essential enhancement method of Aizenman and Grimmett in several ways. In particular, our approach allows us to work under minimal assumptions, namely shift-invariance and summability of the connection function, and it applies to both undirected and directed bond percolation models.

\smallskip
\noindent\footnotesize{{\textbf{AMS-MSC 2020}: Primary 60K35; Secondary 82B43.}

\smallskip
\noindent\textbf{Key Words}: essential enhancement, long-range percolation, stochastic domination, strict inequalities}
\end{abstract}
\end{spacing}

\section{Background and motivation}
 
Consider i.i.d.\ nearest neighbour bond percolation on the integer lattice $\Z^d$. A classical sensitivity result of Aizenman and Grimmett \cite{AizGrim91}, cf.\ \cite{balister2014essentialenhancementsrevisited}, states that the critical percolation threshold $p_\mathsf{c}(\Z^d)$ is \emph{strictly} decreasing in the dimension $d$. Generalising earlier results of Kesten \cite{KestenBook} and Menshikov \cite{Menshikov87}, the work \cite{AizGrim91} went far beyond Bernoulli percolation on $\Z^d$. This is achieved through the general notion of \emph{essential enhancements} to obtain strict monotonicity results for critical thresholds under \emph{local perturbations} in percolation models. The crucial technical ingredient for this theory are differential inequalities obtained through Margulis--Russo-type formulas. The method of \cite{AizGrim91} remains the standard approach to this type of question; see \cite{Martineau19,taggi23} for some recent applications and \cite{PenRoso11} for a continuum version of the argument for bounded range Poisson random-connection models.

Here, we investigate \emph{long-range} percolation models. In their most straightforward form, they consist of a random graph $G$ with vertex set $\Z^d$ in which edges are generated independently with respect to some translation invariant rule, i.e.
\[
\P(\{x,y\} \in E(G))=J(x-y), \quad x,y\in\Z^d,
\]
for some \emph{connectivity function} $J:\Z^d\to[0,1]$ with $J(x)=J(-x)$ and $\sum_{x\in\Z^d}J(x)<\infty.$ We may then ask the following variant of the question of strict monotonicity: 

\textit{Suppose that $J,J'$ are connectivities such that the corresponding graphs $G,G'$ contain an infinite connected component almost surely. If $J'<J$ in the coordinate-wise sense, is it true that $p_\mathsf{c}(G)<p_\mathsf{c}(G')$?} 

As far as we know, this problem has not been comprehensively addressed, neither in a discrete nor in a continuum setting, and the few results known for $J$ with unbounded support \cite{bäumler2025continuitycriticalvalueshape,rosoman2011critical} all rely on the tools of \cite{AizGrim91}. However, the differential inequality approach fails to yield optimal results in this setting since it necessarily leads to Lipschitz-type conditions on the connectivity functions involved. This is an artefact of the technique, which requires the perturbation of the model to be `continuously spreadable' in a certain sense. When adapting the approach to long-range models, the effect of the perturbation therefore needs to be localisable in a controlled way, which produces additional regularity requirements on the connectivity function.

Motivated by these shortcomings of the Aizenman--Grimmett approach in the long-range setting, we develop a new argument to obtain strict inequality of critical values, which relies on stochastic domination techniques instead of differential inequalities. Our work is broadly inspired by the papers \cite{duminil-copin_new_2016,vanneuville2023sharpness,vanneuville2024exponential}, and has a somewhat similar flavour to the techniques used for several (not necessarily independent) nearest-neighbour or finite range models in \cite{bäumler2025localcriteriaglobalconnectivity,Mühlbacher21,klippel25,LimaUngarettiVares2024,martineau2025stochasticdominationliftsrandom}.

\section{Main results}\label{sec:percolation}

We now state the problem and our main results for long-range percolation rigorously in a more general setting as the one discussed above.

\paragraph{Long-range percolation on transitive graphs.}

Following \cite{duminil-copin_new_2016}, let $\Gamma=(V(\Gamma),E(\Gamma))$ denote a locally finite vertex-transitive graph with a distinguished origin vertex $o\in \Gamma$. Throughout, we use $x\in\Gamma$ instead of $x\in V(\Gamma)$, because the edge set of $\Gamma$ will appear only indirectly. Instead, we focus on the set $\Gamma^{[2]}=\{xy: x,y\in \Gamma, x\neq y\}$ of \emph{potential edges}. We often use the notation $e\in\Gamma^{[2]}$ to denote a generic edge instead of $xy\in \Gamma^{[2]}$, if we do not want to specify the endpoints. Note that $xy$ is used as a shorthand for the more cumbersome $\{x,y\}$, and we mainly work with undirected graphs.

We consider independent Bernoulli configurations $\omega$ on $\Gamma^{[2]}$ satisfying
\[
\P(\omega_{e}=1)=J_{e}, \quad e\in\Gamma^{[2]},
\]
for a {connectivity function} $J:\Gamma^{[2]}\to [0,1]$. The connectivity function is adapted to the graph structure of $\Gamma$ in the following way: we assume that there exists a group $\mathsf{S}\subset\operatorname{aut}(\Gamma)$ of automorphisms of $\Gamma$ that acts transitively on (the vertices of) $\Gamma$ such that $J_{s(x)s(y)}=J_{xy}$ for all $s\in\mathsf{S}$. We call such connectivity functions $\mathsf{S}$-\emph{invariant}. For fixed $\mathsf{S}$, we denote by $\mathscr{J}=\mathscr{J}(\Gamma,\mathsf{S})$ the family of all $\mathsf{S}$-invariant connectivity functions satisfying $\sum_{x\in\Gamma}J_{ox}<\infty$. If $J\in\mathscr{J}(\Gamma,\mathsf{S})$ for some $\mathsf{S}$, we call $J$ simply \emph{summable and invariant}.

We use the natural component-wise partial order on connection functions with the usual notational convention that
\[
J'<J \text{ if }J'_{e}\leq J_{e} \text{ for all }e\in\Gamma^{[2]} \text{ and }J-J' \text{ is not identically }0.
\]
A random configuration $\omega:\Gamma^{[2]}\to \{0,1\}$ corresponds to a random subgraph $G(\omega)$ of $(\Gamma,\Gamma^{[2]})$ in the obvious way. We will generally work with these random subgraphs and write
$G_J$ for a realisation of the long-range percolation model on $\Gamma$ with $J\in \mathscr{J}(\Gamma,\mathsf{S})$, where we usually suppress the dependence on the Bernoulli configuration $\omega$ in the notation.

\begin{remark}
The Borel--Cantelli Lemma readily implies that $\sum_{x\in\Gamma}J_{ox}<\infty$ is equivalent to almost sure local finiteness of $G_J.$ Hence, if $J$ and $J'$ both are non-summable, there is no phase transition. On the other hand, if $\sum_{x\in\Gamma}J'_{ox}<\sum_{x\in\Gamma}J_{ox}=\infty$, then $p_\mathsf{c}(G_{J'})>0=p_\mathsf{c}(G_{J})$ by a simple branching process comparison. Hence, it suffices to study summable connection functions.
\end{remark}
For $p\in(0,1)$, we write $pJ$ for the connection function $\{pJ_e,e\in\Gamma^{[2]}\}$. It is elementary to see that $G_{pJ}$ has the same distribution as an i.i.d.\ Bernoulli bond percolation model with retention probability $p$ on $G_J$. We say that \emph{percolation occurs} for $J$, if 
\[
\P\left(\left|\left\{x\in \Gamma: o\overset{G_{J}}{\leftrightarrow}x\right\}\right|=\infty\right)>0,
\]
where $\{o\overset{G_{J}}{\leftrightarrow}x\}$ denotes the event that $o$ is connected to $x$ within $G_J$.  
We furthermore say that 
\begin{itemize}
    \item $J$ is \emph{critical} if, for any choice of $\varepsilon>0$, percolation occurs for $(1+\varepsilon)J\wedge 1$ and percolation does not occur for $(1-\varepsilon)J$,
    \item $J$ is \emph{subcritical} if there exists $\varepsilon>0$, such that percolation does not occur for $(1+\varepsilon)J\wedge 1$, and
    \item $J$ is \emph{supercritical} if there exists $\varepsilon>0$, such that percolation does occur for $(1-\varepsilon)J$.
\end{itemize}
 The above definitions are characterized by how the event that percolation occurs is affected by a global perturbation of the connection function and are guided by the comparison with i.i.d.\ nearest neighbour bond percolation, see also the discussion of critical behaviour in \cite{Berger2002}. 
 
 \paragraph{Main results.} 
 The goal of this paper is to establish that criticality is sensitive to local perturbations of the connectivity function.  
 To formalize this, denote by $\mathscr{J}_{<1}(\Gamma,\mathsf{S})\subset\mathscr{J}(\Gamma,\mathsf{S})$ the summable and invariant connectivity functions which do not assume the value $1$. Note that $J\in\mathscr{J}(\Gamma,\mathsf{S})$ can be subcritical only if it belongs to this more restrictive class. We say that $J\in\mathscr{J}(\Gamma,\mathsf{S})$ is \emph{strongly critical}, if 
\begin{itemize}
    \item for any $J'\in \mathscr{J}(\Gamma,\mathsf{S})$ with $J'<J$ it holds that $\E\left[\left|\left\{x\in \Gamma: o\overset{G_{J'}}{\leftrightarrow}x\right\}\right|\right]<\infty$.
    \item and percolation occurs for every $J''\in \mathscr{J}(\Gamma,\mathsf{S})$ with $J''>J$.
\end{itemize}

\begin{theorem}[{Characterisation of critical parameter set}]\label{thm:main}
Let $J\in\mathscr{J}_{<1}(\Gamma,\mathsf{S})$. 
Then $J$ is strongly critical if and only if $J$ is critical. 
\end{theorem}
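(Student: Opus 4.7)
The direction \emph{strongly critical $\Rightarrow$ critical} is immediate from the definitions. Assume $J$ is strongly critical. For every $\varepsilon>0$ the functions $J^- := (1-\varepsilon)J$ and $J^+ := (1+\varepsilon)J\wedge 1$ lie in $\mathscr{J}(\Gamma,\mathsf{S})$ and satisfy $J^- < J < J^+$ coordinatewise (using $J\in\mathscr{J}_{<1}$ together with $J\not\equiv 0$, so the differences are not identically zero). The two defining properties of strong criticality then give $\E[|\{x\in\Gamma:o\overset{G_{J^-}}{\leftrightarrow}x\}|]<\infty$ — whence no percolation at $J^-$ — and percolation at $J^+$, which is exactly the definition of criticality.

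For the converse, suppose $J$ is critical and fix $J'<J$ and $J''>J$ in $\mathscr{J}(\Gamma,\mathsf{S})$. By $\mathsf{S}$-invariance, the perturbations $\nu := J-J'$ and $\mu := J''-J$ are $\mathsf{S}$-invariant, non-negative, and attain a strictly positive constant value on at least one full orbit $O$ of $\mathsf{S}$. The key difficulty is that $\mu$ and $\nu$ are generally supported on a single orbit, while the uniform rescalings $(1\pm\varepsilon)J$ perturb every orbit on which $J$ is positive. Consequently, no pointwise stochastic domination between $J''$ and $(1+\varepsilon)J\wedge 1$ (or between $J'$ and $(1-\varepsilon)J$) is available, and this mismatch is precisely what forces Aizenman--Grimmett-type differential inequalities to impose extra regularity on $J$ in the long-range setting.

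I would overcome this via the coupling strategy advertised in the introduction. For the \emph{supercritical} assertion, decompose $G_{J''}\stackrel{d}{=} G_J\cup H$ as an independent union, where $H$ is an $\mathsf{S}$-invariant Bernoulli bond process of constant rate $\mu_e/(1-J_e)$ along $O$ (well-defined since $J\in\mathscr{J}_{<1}$). A multi-scale sprinkling argument should then show that, despite being supported on a single orbit, the extra edges of $H$ stochastically dominate, at some renormalised block scale, a uniform enhancement of the form $(1+\varepsilon)J\wedge 1-J$ for some $\varepsilon>0$. Combined with the percolation of $G_{(1+\varepsilon)J\wedge 1}$ that criticality provides, this yields percolation of $G_{J''}$. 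The \emph{subcritical} assertion is treated dually: a renormalised coupling based on the positive thinning density $\nu_e/J_e$ on $O$ yields stochastic domination of $G_{J'}$ by $G_{(1-\varepsilon)J}$ at a block scale, and since $(1-\varepsilon)J$ does not percolate by criticality, a sharpness-type input for summable translation-invariant long-range percolation (of the type in \cite{duminil-copin_new_2016}) promotes this to $\E[|\{x\in\Gamma:o\overset{G_{J'}}{\leftrightarrow}x\}|]<\infty$.

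\textbf{Main obstacle.} The technical heart is the construction of the multi-scale coupling that amplifies a localised one-orbit perturbation into a uniform stochastic comparison with the family $\{(1\pm\varepsilon)J\}$. Here the orbit structure of $\mathsf{S}$, the summability of $J$, and its $\mathsf{S}$-invariance must all be used simultaneously; this is precisely what the paper's novel approach supplies in place of the Aizenman--Grimmett machinery.
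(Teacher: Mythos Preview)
The forward direction is correct. For the converse you have a genuine gap: you correctly isolate the obstacle --- an orbit-localised perturbation admits no pointwise comparison with the uniform rescalings $(1\pm\varepsilon)J$ --- but your remedy (``multi-scale sprinkling'', ``renormalised block coupling'') is never constructed; nothing beyond the decomposition $G_{J''}\stackrel{d}{=}G_J\cup H$ is actually proved, and the phrase ``should then show'' is doing all the work.

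The paper's mechanism is also different from your guess and structurally simpler. There is no multi-scale or block argument. The technical core is a single pathwise coupling (Proposition~\ref{prop:coupling}) built from an edge-by-edge exploration of the origin cluster in $G_J$: whenever a new vertex $y$ is reached, one checks whether all unexplored $G_J$-edges out of $y$ lie in translates of $\Delta=\textup{supp}(J-J')$; if so, they are simultaneously absent in $G_{J'}$ with uniformly positive probability, and this event is coupled to an independent edge-thinning at some rate $1-p$. The conclusion is that the cluster of $o$ in $G_{J'}$ is contained in the $1$-neighbourhood (in $G_J$) of the cluster of $o$ in $G_{pJ}$, which together with subcritical sharpness from \cite{duminil-copin_new_2016} yields Theorem~\ref{thm:main2}. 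A further structural difference concerns the supercritical half: the paper does \emph{not} run a separate upward coupling as you propose, but argues by contradiction --- if some $J''>J$ fails to percolate, then $J''$ must itself be critical, and Theorem~\ref{thm:main2} applied to the pair $J<J''$ forces $J$ to be subcritical. Thus a single downward comparison handles both parts of strong criticality.
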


Our proof relies on a coupling of the origin cluster of $G_{J'}$ to a slightly perturbed version of the origin cluster in $G_{pJ}$ for some $p=p(J,J')<1$. The coupling is based on a local exploration scheme of the clusters containing the origin. For the percolative phase, this implies domination of a critical connection function as a sufficient condition for supercriticality.
\begin{theorem}[{Well-behaviour under upward perturbation}]\label{thm:main3}
Let $J \in \mathscr{J}(\Gamma,\mathsf{S})$. If there exists a critical $J' \in \mathscr{J}_{<1}(\Gamma,\mathsf{S})$ with $J>J'$, then $J$ is supercritical.
\end{theorem}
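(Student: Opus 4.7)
Plan. The plan is to combine the characterisation in Theorem~\ref{thm:main} with the local exploration coupling alluded to between its statement and the present one. Since $J' \in \mathscr{J}_{<1}(\Gamma,\mathsf{S})$ is critical, Theorem~\ref{thm:main} tells us that $J'$ is strongly critical, so percolation occurs for every summable, $\mathsf{S}$-invariant $J''>J'$. Given $J>J'$, pick an edge $e_0$ with $J_{e_0}>J'_{e_0}$ and define
\[
J''_e = \begin{cases}\tfrac12(J'_e+J_e),&e\text{ in the }\mathsf{S}\text{-orbit of }e_0,\\ J'_e,&\text{otherwise}.\end{cases}
\]
Then $J''$ is $\mathsf{S}$-invariant, summable, and takes values in $[0,1)$, so $J''\in\mathscr{J}_{<1}(\Gamma,\mathsf{S})$; moreover $J'<J''$ and $J''<J$, with both strict inequalities realised on the orbit of $e_0$. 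Strong criticality of $J'$ therefore gives that $G_{J''}$ percolates.

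I would next apply the coupling construction of the paper to the pair $(J,J'')$, which yields a parameter $p=p(J,J'')\in(0,1)$ and a joint law under which the origin cluster of $G_{J''}$ is contained in the origin cluster of $G_{pJ}\cup\xi$, where $\xi$ is a random perturbation supported on finitely many edges near $o$. Percolation of $G_{J''}$ then forces the cluster of $o$ in $G_{pJ}\cup\xi$ to be infinite with positive probability. Since $\xi$ uses only finitely many edges, deleting them can break this infinite cluster into only finitely many pieces, at least one of which must remain infinite inside $G_{pJ}$. Hence $G_{pJ}$ contains an infinite component with positive probability, and $\mathsf{S}$-transitivity gives $\P(o\leftrightarrow \infty\text{ in }G_{pJ})>0$. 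Taking $\varepsilon=(1-p)/2>0$ one has $pJ\le (1-\varepsilon)J$ componentwise, so by monotonicity $G_{(1-\varepsilon)J}$ percolates as well, proving supercriticality of $J$.

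The main technical hurdle is the coupling itself. A naive edge-by-edge coupling driven by uniform random variables $U_e$ would require $pJ_e\ge J''_e$ for every edge $e$, i.e., $p\ge \sup_e J''_e/J_e$, a supremum that can equal $1$ in our setting because $J''$ agrees with $J'$ outside the orbit of $e_0$ and $J'$ may coincide with $J$ on many of those edges. The local exploration scheme circumvents this by revealing edges one at a time along the growing cluster and using the genuine slack $J-J''=\tfrac12(J-J')>0$ on orbit-edges to offset the mismatch on edges where $J''=J$; the perturbation $\xi$ absorbs the residual shortfall, and the crucial quantitative input will be that $\xi$ indeed touches only finitely many edges almost surely, so that its deletion in the last step of the argument is harmless.
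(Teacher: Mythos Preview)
The paper's proof is a three-line indirect argument that does not touch the coupling again: if $J$ were critical, then Theorem~\ref{thm:main2} (with $J$ in the role of the critical upper bound and $J'\in\mathscr{J}_{<1}$ below it) would force $J'$ to be subcritical, contradicting criticality of $J'$; and $J$ cannot be subcritical by plain monotonicity, since $(1+\varepsilon)J\wedge 1\ge (1+\varepsilon)J'\wedge 1$ percolates for every $\varepsilon>0$. Trichotomy then gives supercriticality. Your route through Theorem~\ref{thm:main} and a fresh application of the coupling is considerably longer and, as written, has two genuine gaps.

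First, Proposition~\ref{prop:coupling} requires $\Delta=\textup{supp}(J-J'')$ to be finite. With your choice of $J''$ (equal to $J'$ off the orbit of $e_0$) one has $\textup{supp}(J-J'')=\textup{supp}(J-J')$, which may well consist of infinitely many orbits, so the proposition does not apply to the pair $(J,J'')$. This is reparable: set $J''=J$ on all orbits where $J_e<1$ except one orbit $e_0$ with $J_{e_0}>J'_{e_0}$, put $J''_{e_0}=\tfrac12(J_{e_0}+J'_{e_0})$, and on the finitely many orbits with $J_e=1$ put $J''_e=\tfrac12(1+J'_e)$. Then $J'<J''<J$, $J''\in\mathscr{J}_{<1}$, and $\textup{supp}(J-J'')$ is finite.

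Second, your description of what the coupling delivers is incorrect. Proposition~\ref{prop:coupling} does \emph{not} produce an unconditional inclusion $C'_o\subset C_o\cup\xi$ with $\xi$ a.s.\ finite. Its conclusion is conditional: \emph{if} $o\not\leftrightarrow B_\Gamma(o,N)^{\mathsf c}$ in $G_{pJ}$, \emph{then} $C'_o\subset B_{G_J}(C_o,1)$. The ``extra'' edges beyond $C_o$ are all $G_J$-edges emanating from $C_o$; this set is finite precisely when $C_o$ is, and is typically infinite on the event you care about. Hence your ``delete $\xi$ and keep an infinite piece'' step rests on a false premise. The correct deduction is the straightforward contrapositive: on $\{|C'_o|=\infty\}$ the conditional conclusion fails for every $N$, so $o\leftrightarrow B_\Gamma(o,N)^{\mathsf c}$ in $G_{pJ}$ for all $N$, i.e.\ $|C_o|=\infty$. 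This already gives percolation of $G_{pJ}$, and then $(1-\varepsilon)J\ge pJ$ for $\varepsilon=1-p$ yields supercriticality. With these two fixes your argument goes through, but the paper's route via Theorem~\ref{thm:main2} is both shorter and avoids re-invoking Proposition~\ref{prop:coupling}.
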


Similarly, our method leads to a generalised variant of the subcritical sharpness results of \cite{AB87} in the following way:
\begin{theorem}[{Well-behaviour under downward perturbation}]\label{thm:main2}
	Let $J \in \mathscr{J}_{<1}(\Gamma,\mathsf{S})$. If there exists a critical $J'' \in \mathscr{J}(\Gamma,\mathsf{S})$ with $J<J''$, then $J$ is subcritical and 
	\[
	\E\left[\left|\left\{x\in \Gamma: o\overset{G_{J}}{\leftrightarrow}x\right\}\right|\right]<\infty.
	\]
	If $J$ is in addition finitely supported, then \[ \P\left(o\overset{G_{J}}{\leftrightarrow}B_\Gamma(o,n)^\mathsf{c}\right)\le \textup{e}^{-c(\Gamma,J)n},\text{ for all }n\in\N,\]
	where $c(\Gamma,J)>0$ is a model-dependent constant and $B_\Gamma(o,n)$ denotes the ball of radius $n$ around $o$ in $\Gamma$.
\end{theorem}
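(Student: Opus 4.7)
My plan is to deduce Theorem~\ref{thm:main2} from Theorem~\ref{thm:main} via a clean sandwich argument. The finite expected cluster size will be immediate from strong criticality of $J''$, while subcriticality will follow by contradiction: if $J$ were itself critical, Theorem~\ref{thm:main} would also promote $J$ to strongly critical, and a carefully chosen intermediate connectivity $J<J^\sharp<J''$ would then simultaneously percolate (by strong criticality of $J$) and have finite expected cluster size (by strong criticality of $J''$)---a contradiction.

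In detail, I first apply Theorem~\ref{thm:main} to $J''$ (assuming $J''\in\mathscr{J}_{<1}(\Gamma,\mathsf{S})$; the edge case is discussed below) to obtain strong criticality of $J''$. The first bullet of strong criticality, applied to the perturbation $J<J''$, directly yields the finite expectation claim. For subcriticality, I proceed by contradiction and assume that $J$ is not subcritical. Since $\E[|\{x\in\Gamma: o\overset{G_J}{\leftrightarrow}x\}|]<\infty$ already prohibits percolation for $J$, and hence by monotonicity also for $(1-\varepsilon)J$ for every $\varepsilon>0$, the failure of subcriticality means that percolation occurs for $(1+\varepsilon)J\wedge 1$ for every $\varepsilon>0$. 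By the paper's definition of ``critical'', this places $J$ itself in the critical class, so Theorem~\ref{thm:main}---now applied to $J\in\mathscr{J}_{<1}$---also promotes $J$ to strongly critical. To close the loop I construct a sandwiched witness: since $J''-J\not\equiv 0$ and both connectivities are $\mathsf{S}$-invariant, there is an $\mathsf{S}$-orbit $O\subset\Gamma^{[2]}$ on which $J''_e-J_e$ takes a constant positive value $\delta_O$; for any $\eta\in(0,\delta_O)$, set $J^\sharp:=J+\eta\mathbbm{1}_O$. Summability of $J^\sharp$ follows because $O$ can meet only finitely many edges incident to $o$ (otherwise $\sum_{x\in\Gamma}J''_{ox}$ would already be infinite, since $J''|_O\ge\delta_O>0$). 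By construction $J<J^\sharp<J''$ in $\mathscr{J}(\Gamma,\mathsf{S})$, so strong criticality of $J$ forces percolation for $J^\sharp$, while strong criticality of $J''$ forces $\E[|\{x\in\Gamma: o\overset{G_{J^\sharp}}{\leftrightarrow}x\}|]<\infty$ and hence no percolation for $J^\sharp$---the desired contradiction.

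For the finitely supported case, $G_J$ becomes a bounded-range i.i.d.\ Bernoulli bond percolation model on the vertex-transitive graph $\Gamma$; subcriticality (just established) together with the classical sharpness-of-phase-transition theorems of Aizenman--Barsky, Menshikov, or Duminil-Copin--Tassion yields the exponential decay estimate. The main technical obstacle I foresee is the edge case $J''\notin\mathscr{J}_{<1}$, in which Theorem~\ref{thm:main} is not directly applicable to $J''$; resolving this cleanly will likely require either a mild truncation argument or a direct verification, using the paper's coupling machinery, that ``critical $\Rightarrow$ strongly critical'' extends to all of $\mathscr{J}(\Gamma,\mathsf{S})$. The latter seems plausible since the coupling underlying Theorem~\ref{thm:main} does not obviously rely on $J$ avoiding the value~$1$.
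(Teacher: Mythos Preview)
Your proposal is circular relative to the paper's logical structure. In the paper, Theorem~\ref{thm:main2} is established \emph{first} and is then used in an essential way to prove Theorem~\ref{thm:main}: both bullets of strong criticality in the proof of Theorem~\ref{thm:main} are obtained by invoking Theorem~\ref{thm:main2}. Hence deducing Theorem~\ref{thm:main2} from Theorem~\ref{thm:main}, as you propose, presupposes the very statement you are trying to prove. Your sandwich construction with $J<J^\sharp<J''$ is internally sound once Theorem~\ref{thm:main} is granted, but you offer no independent route to Theorem~\ref{thm:main}, and the paper has none either.

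The paper's actual argument is direct and bypasses Theorem~\ref{thm:main} entirely. It applies the coupling of Proposition~\ref{prop:coupling} (with the smaller function $J$ playing the role of~$J'$ there) to produce $p=p(J,J'')\in(0,1)$ together with the containment of the origin cluster of $G_J$ inside $B_{G_{J''}}(C_o,1)$, where $C_o$ is the origin cluster of $G_{pJ''}$. Since $J''$ is critical, $pJ''=(1-(1-p))J''$ is strictly subcritical, so the Duminil-Copin--Tassion sharpness theorem gives $\E|C_o|<\infty$; summability of $J''$ then transfers this to finite susceptibility for $G_J$ via the containment. Feeding finite susceptibility back into the sharpness theorem yields an $\varepsilon>0$ with $G_{(1+\varepsilon)J}$ still of finite susceptibility, hence $J$ is subcritical; the exponential decay for finitely supported $J$ is just the quantitative part of the same sharpness result. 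Note that this direct route also dissolves the edge case you flagged: Proposition~\ref{prop:coupling} requires only the \emph{smaller} connectivity to lie in $\mathscr{J}_{<1}$, which is exactly the standing hypothesis on $J$, so no restriction on $J''$ is needed.
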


\paragraph{Organisation of the manuscript.} The following section is devoted to the proofs of our main results. 
We first give a heuristic explanation of our proof, before detailing our coupling argument involving a local exploration algorithm of the percolation cluster. Then we state and prove Proposition \ref{prop:coupling}, which is our key technical result which provides the aforementioned stochastic domination, and from which we derive our main results. 
Section~\ref{sec:conclusion} contains a further discussion of how our results extend beyond the above setting, e.g.\ to directed and oriented percolation, and of related recent works. 

\section{Proof of main result}\label{sec:proof}

\subsection{Heuristic explanation of the proof}\label{sec:heuristic}
Let us provide some intuition as to how our argument works. Given $J'<J$, we may couple the associated percolation models $G=G_J,G'=G_{J'}$ in the obvious way to obtain $G'\subset G$ under the coupling. In particular, if $\Delta=\textrm{supp}(J-J')$ denotes the set of coordinates on which $J'$ differs from $J$, we may view $G'$ as an \emph{independent inhomogeneous} percolation of $G$. Let us call an edge $e\in E(G)$ \emph{fragile} if \emph{all} its adjacent edges in $G$ are in $\mathsf{S}$-translates of $\Delta$. Thus, with a small probability $\varepsilon$, $e$ is isolated in $G'$. We say $e$ is \emph{shattered} if this occurs, since it is of no use for achieving percolation in $G'$. Note that fragility is determined by the neighbourhood of the edge in $G$, whereas the property of being shattered is determined by the neighbourhood in $G'$. By invariance of $J$ and $J'$, each edge $e$ has the same probability of being shattered, however the fragility and shattering status of edges is not independent. We have thus related the \emph{independent inhomogeneous} percolation of $G$ to a \emph{dependent homogeneous} percolation. Nevertheless, the dependencies are not very complicated: edges of $G$ that do not have an adjacent edge in common obtain their status independently. 

If $J$ has bounded support, one may now directly apply the classical domination result of Liggett, Schonman and Stacey \cite[Theorem 1.3]{LSS97} to conclude that the shattered edges dominate an i.i.d.\ field of intensity $\varepsilon'\ll \varepsilon$ over $E(G)$. In other words, the downward-effect of going from $G$ to $G'$ is at least as strong as performing an independent bond percolation on $G$ with retention parameter $1-\varepsilon'$, which implies subcriticality of $J'$, if $J$ is sufficiently close to critical. Since we work with unbounded connectivity functions, this approach does not quite work, but it nonetheless provides a good intuition for what our algorithmic construction in the following section is designed to achieve. We replace the \emph{global} domination by a product measure of \cite{LSS97} by a \emph{local} coupling of the cluster exploration, which is flexible enough to also work in the infinite support setting.

\subsection{Exploration algorithm}\label{sec:algo} We now describe the exploration algorithm that is at the heart of our coupling arguments. The parameters of the algorithm are 
\begin{itemize}
\item the connectivity function $J$, 
\item A finite set $\Delta \subset V$,
\item some small number $q>0$,
\item and an integer $n>0$.
\end{itemize}
Later, in the proofs, we will set $\Delta\subset\textup{supp}(J-J')$, and the number $q$ will be carefully chosen for our purpose as a function of $J$ and $J'$. The parameter $n$ is there to ensure that we limit our exploration to the edges in $B(o,n)$ so that our algorithm terminates after finitely many steps\footnote{This is a mere matter of taste -- it is not difficult to describe a variant of the algorithm that explores a (potentially) infinite cluster.}. 

We work on an extended probability space $\bar{\Omega}$ that carries
\begin{itemize}
    \item a field of \emph{edge marks}, i.i.d.\ $\operatorname{Uniform}(0,1)$ random variables $\{U_e, e\in \Gamma^{[2]}\}$ to sample the edges of $G_J$,
    \item for each edge $e=xy\in \Gamma^{[2]}$ an independent triplet $(V_{xy}^x, W_{xy}, V_{xy}^y)$ of i.i.d.\ $\operatorname{Uniform}(0,1)$ random variables to perform various additional percolation and randomisation steps. 
    We call these the \emph{auxiliary edge marks} and they are used to describe how we explore edges of $G_{J'}$ in $G_J$.
\end{itemize}

The algorithm explores the random configuration locally around $o$ by adding edges that are present in a percolated subgraph $H_n$ of $G=G_J$ together with a collection of edges in $H_n$ that are \emph{tagged}. These tagged edges are later going to be coupled to leaves of the cluster of $o$ in $G_{J'}$. 

Let $\mathcal{E}[n]$ denote the set of all potential edges with both endpoints 
in $B_\Gamma(o,n) \coloneqq \{x \in V \colon d_{\Gamma}(o,x)\leq n\}$ and let 
\[ \mathcal{T}=\mathcal{T}(G,n)\coloneqq \big\{v \in B_\Gamma(o,n):\textup{dist}_G\big(v,B_\Gamma(o,n)^\mathsf{c}\big)=1 \big\},\]
 where $d_{\Gamma}$ is the graph distance on $\Gamma$ and $\textup{dist}_G$ denotes the induced graph distance on $G$. 
At the initialisation of the algorithm, all edge marks $U_{e}$ with $e\in\Gamma^{[2]}\setminus\mathcal{E}[n]$ (and therefore the set $\mathcal{T}(G,n)$) are known. The algorithm iteratively reveals certain edge marks (and auxiliary edge marks) assigned to edges in $\mathcal{E}[n]$, starting from the origin.

As the algorithm reveals more and more vertices in $G$, it eventually terminates either upon running out of viable edges to process or by establishing a path from the origin to $\mathcal{T}(G,n)$, which implies that $H_n$ locally percolates.

The algorithm operates using the following lists for each integer $t\geq 0$:
\begin{itemize}
    \item $A_t \subset V$ - active vertices after exploration stage $t$.
    \item $B_t \subset V$ - boundary vertices after exploration stage $t$.
    \item $E_t \subset \mathcal{E}[n]\times(0,1)$ - all unexplored edges $e$ for which mark information has been revealed up to and including exploration stage $t$, together with their respective mark $U_e$.
    \item $L_t \subset \mathcal{E}[n]$ - unexplored edges after exploration stage $t$.
\end{itemize}
Each exploration stage involves the exploration of a single edge. An exploration stage may include an \emph{(F)-check} or an \emph{(S)-check}, the procedures of which are explained below. They represent adapted versions of fragile and shattered edges, respectively, as they appeared in the heuristics of Section~\ref{sec:heuristic}. During these checks (and only there), mark information of unexplored edges is potentially revealed, which potentially creates dependencies between the marks revealed in the exploration steps.

We initialise the algorithm by setting 
\begin{equation}
    A_0=\{0\}, \quad 
B_0=\emptyset, \quad
  E_0=\emptyset, \quad 
    L_0=\mathcal{E}[n],
\end{equation}
together with a uniform random ordering of $\mathcal{E}[n]$ that is used to determine the next edge to be explored. 

We can now provide the formal termination condition: the algorithm terminates at stage $t$, if during stage $t$, either one of the following conditions occur:
\begin{itemize}
    \item $A_t=\emptyset$ during stage $t$ or $A_{t+1}=\emptyset$ during the preprocessing step (P) at the beginning of stage $t+1$ as described below, i.e.\ the algorithm runs out of active vertices;
    \item $(A_t\cup B_t)\cap \mathcal{T} \neq \emptyset$, i.e.\ a path from $o$ to $\mathcal{T}$ in $H_n$ is discovered.
\end{itemize}

Upon termination at stage $t$, the algorithm returns $A_t, B_t$, all discovered open and closed edges (both in $H_n$ and $G$) and whether or not they are tagged. Here, an open edge refers to an edge that is present in $H_n$ or $G$.

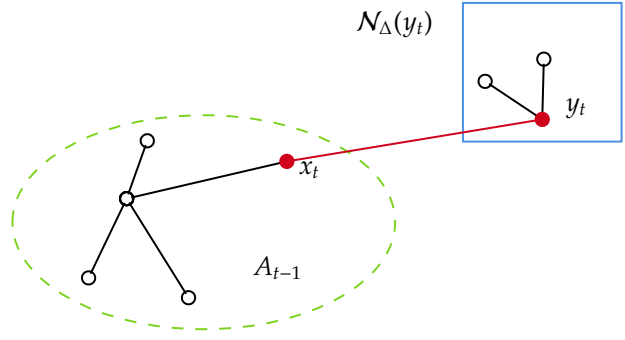
\begin{figure}
	\tikzset{every picture/.style={line width=0.75pt}}        
	\begin{center}
		\begin{tikzpicture}[x=0.75pt,y=0.75pt,yscale=-1,xscale=1]
		
			\draw  [color={rgb, 255:red, 126; green, 211; blue, 33 }  ,draw opacity=1 ][dash pattern={on 4.5pt off 4.5pt}] (12,171.5) .. controls (12,141.68) and (55.2,117.5) .. (108.5,117.5) .. controls (161.8,117.5) and (205,141.68) .. (205,171.5) .. controls (205,201.32) and (161.8,225.5) .. (108.5,225.5) .. controls (55.2,225.5) and (12,201.32) .. (12,171.5) -- cycle ;
			\draw    (71.89,159.07) -- (150.4,140.8) ;
			\draw [shift={(69.6,159.6)}, rotate = 346.9] [color={rgb, 255:red, 0; green, 0; blue, 0 }  ][line width=0.75]      (0, 0) circle [x radius= 3.35, y radius= 3.35]   ;
			\draw    (70.84,161.59) -- (99.56,207.61) ;
			\draw [shift={(100.8,209.6)}, rotate = 58.04] [color={rgb, 255:red, 0; green, 0; blue, 0 }  ][line width=0.75]      (0, 0) circle [x radius= 3.35, y radius= 3.35]   ;
			\draw [shift={(69.6,159.6)}, rotate = 58.04] [color={rgb, 255:red, 0; green, 0; blue, 0 }  ][line width=0.75]      (0, 0) circle [x radius= 3.35, y radius= 3.35]   ;
			\draw    (70.39,157.39) -- (79.21,132.61) ;
			\draw [shift={(80,130.4)}, rotate = 289.6] [color={rgb, 255:red, 0; green, 0; blue, 0 }  ][line width=0.75]      (0, 0) circle [x radius= 3.35, y radius= 3.35]   ;
			\draw [shift={(69.6,159.6)}, rotate = 289.6] [color={rgb, 255:red, 0; green, 0; blue, 0 }  ][line width=0.75]      (0, 0) circle [x radius= 3.35, y radius= 3.35]   ;
			\draw    (51.42,197.48) -- (68.58,161.72) ;
			\draw [shift={(69.6,159.6)}, rotate = 295.64] [color={rgb, 255:red, 0; green, 0; blue, 0 }  ][line width=0.75]      (0, 0) circle [x radius= 3.35, y radius= 3.35]   ;
			\draw [shift={(50.4,199.6)}, rotate = 295.64] [color={rgb, 255:red, 0; green, 0; blue, 0 }  ][line width=0.75]      (0, 0) circle [x radius= 3.35, y radius= 3.35]   ;
			\draw    (252.36,101.7) -- (279.2,119.6) ;
			\draw [shift={(250.4,100.4)}, rotate = 33.69] [color={rgb, 255:red, 0; green, 0; blue, 0 }  ][line width=0.75]      (0, 0) circle [x radius= 3.35, y radius= 3.35]   ;
			\draw    (279.2,119.6) -- (279.94,91.55) ;
			\draw [shift={(280,89.2)}, rotate = 271.51] [color={rgb, 255:red, 0; green, 0; blue, 0 }  ][line width=0.75]      (0, 0) circle [x radius= 3.35, y radius= 3.35]   ;
			\draw  [color={rgb, 255:red, 74; green, 144; blue, 226 }  ,draw opacity=1 ] (239.2,60.8) -- (319.2,60.8) -- (319.2,130.8) -- (239.2,130.8) -- cycle ;
			\draw [color={rgb, 255:red, 208; green, 2; blue, 27 }  ,draw opacity=1 ]   (150.4,140.8) -- (279.2,119.6) ;
			\draw [shift={(279.2,119.6)}, rotate = 350.65] [color={rgb, 255:red, 208; green, 2; blue, 27 }  ,draw opacity=1 ][fill={rgb, 255:red, 208; green, 2; blue, 27 }  ,fill opacity=1 ][line width=0.75]      (0, 0) circle [x radius= 3.35, y radius= 3.35]   ;
			\draw [shift={(150.4,140.8)}, rotate = 350.65] [color={rgb, 255:red, 208; green, 2; blue, 27 }  ,draw opacity=1 ][fill={rgb, 255:red, 208; green, 2; blue, 27 }  ,fill opacity=1 ][line width=0.75]      (0, 0) circle [x radius= 3.35, y radius= 3.35]   ;
			
			\draw (155.2,139.4) node [anchor=north west][inner sep=0.75pt]    {$x_{t}$};
			\draw (289.2,107.2) node [anchor=north west][inner sep=0.75pt]    {$y_{t}$};
			\draw (132.4,187.2) node [anchor=north west][inner sep=0.75pt]    {$A_{t-1}$};
			\draw (184,63.4) node [anchor=north west][inner sep=0.75pt]    {$\mathcal{N}_{\Delta }( y_{t})$};

		\end{tikzpicture}
		
	\end{center}
	\caption{Schematic depiction of part of the exploration scheme: when the active edge $x_ty_t$ (red) is explored, a passed (F)-check ensures that the only unexplored vertices that can be reached from $y_t$ are in $\mathcal{N}_{\Delta }( y_{t})$ (blue rectangle). In the coupling of exploration to random graphs, these edges correspond to edges in $G_J$ that are potentially all removed in $G_{J'}$, in which case $x_ty_t$ becomes irrelevant for the percolation event in $G_{J'}$.}
\end{figure}

We now describe an exploration stage $t\geq 1$, conditionally on the event that the algorithm has not terminated at any stage $s<t$.
\begin{itemize}
    \item [(P)] \emph{Preprocessing}: 
     \begin{enumerate}[(P.a)]
         \item If there exist vertices in $A_{t-1}$ without incident edges in $L_{t-1}$, then remove these vertices from $A_{t-1}$ and include them into $B_{t-1}$.
         \item Pick the smallest edge $e_t$ in $L_t$ that is adjacent to the set $A_{t-1}$. 
         \item  Set $L_t=L_{t-1}\setminus \{e_t\}$ and begin the exploration of $e_t$ with step (1) below.
     \end{enumerate} 
\item[(1)] Decide whether the marks of $e_t$ are revealed or not: 
\begin{enumerate}[({1}.a)] \item If both endpoints $x_t$ and $y_t$ of $e_t$ are in $A_{t-1}$, then the edge is irrelevant for the cluster exploration. Advance to the next stage $t+1$.
    \item Otherwise, relabel $e_t=x_ty_t$ with $x_t\in A_{t-1}, y_t\notin A_{t-1}$ and go to (2).
    \end{enumerate} 
\item[(2)] Check, if $(e_t,U_{e_t})\in E_{t-1}$, i.e.\ if the edge mark of $e_t$ has been revealed in a previous step.
If yes move to step (F), if no move to step (3).
\item[(3)] Reveal $U_{e_t}$.
\begin{enumerate}[({3}.a)]
    \item If $U_{e_t}>J_{e_t}$, then $e_t$ is closed in $G$. Advance to stage $t+1$.
    \item If $U_{e_t}\leq J_{e_t}$, then $e_t$ is open in $G$.
    \begin{enumerate}[({3.b}.i)]
        \item If $x_ty_t\notin \Delta$, then perform the check (F) and advance to (4).
        \item If $x_ty_t\in \Delta$, then advance to (5).
        \end{enumerate}
\end{enumerate}
\item[(4)] Proceed according to whether $x_ty_t\notin \Delta$ passed (F) or failed:
    \begin{enumerate}[({4}.a)]
    \item In case of failure, set $A_{t}=A_{t-1}\cup{y_t}$. The edge $e_t$ is open in $H_n$. Advance to stage $t+1$.
    \item Otherwise, set $e_t$ to open in $H_n$ and advance to the S-check (S).
    \end{enumerate}
\item[(5)]Reveal $W_{e_t}$.
     \begin{enumerate}[({5}.a)]
        \item If $W_{x_ty_t}\leq 1-q$, then $e_t$ is open in $H_n$. Set $A_{t}=A_{t-1}\cup{y_t}$, advance to stage $t+1$.
        \item If $W_{x_ty_t}>1-q$, then $e_t$ is open in $G$ but closed in $H_n$. Set $A_{t}=A_{t-1}$, advance to stage $t+1$.
     \end{enumerate}    
\item[(F)] \emph{F-check}. 
\begin{enumerate}[({F}.a)]
\item Set $\mathcal{N}_{\Delta^\mathsf{C}}(y_t)\coloneqq \{z\in B_\Gamma(o,N) \colon z \notin A_{t-1}, y_tz\notin \Delta, y_tz\in L_{t-1}\}$. 
\item Reveal $U_{y_tz}$ for all $z\in \mathcal{N}_{\Delta^\mathsf{C}}(y_t)$.
\item The edge $e_t$ passes (F) if 
    \[ \sum_{z\in \mathcal{N}_{\Delta^\mathsf{c}}(y_t)}\1\{U_{y_t z}\leq J_{y_t z}\}=0,\] and otherwise it fails. 
    \item Remove all edges $y_tz$ with $z\in \mathcal{N}_{\Delta^\mathsf{c}}(y_t)$ such that $U_{y_t z}> J_{y_t z}$ from $L_t$. 
    \item Set \[E_t=E_{t-1}\cup\{U_{y_tz}:y_tz\in \mathcal{N}_{\Delta^\mathsf{c}}(y_t), U_{y_tz}\leq J_{y_tz}\},\]
    and proceed with (4).
    \end{enumerate}
    \item[(S)] \emph{S-check}: 
    \begin{enumerate}[({S}.a)]
\item Set $\mathcal{N}_{\Delta}(y_t) \coloneqq \{z\in B_\Gamma(o,N) \colon z \notin A_{t-1}, y_tz\in \Delta\}$.
\item $e_t$ receives a tag if 
    \[\sum_{z\in \mathcal{N}_{\Delta}(y_t)}\1\{V_{y_t z}^{y_t}\leq 1-q\}=0.\] 
         In that case, set $B_t=B_{t-1}\cup\{y_t\}$ and remove all edges incident to $y_t$ from $L_t$. 
    \item  $e_t$ remains untagged if 
    \[\sum_{z\in \mathcal{N}_{\Delta}(y_t)}\1\{V_{y_t z}^{y_t}\leq 1-q\}>0.\]
    In that case, set $A_t=A_{t-1}\cup\{y_t\}$ and remove all edges $y_tz$ with $z\in \mathcal{N}_{\Delta}(y_t)$ such that $V_{y_t z}^{y_t}> 1-q$ from $L_t$. 
    \item Advance to stage $t+1$.
    \end{enumerate}
\end{itemize}
The exploration algorithm described above is carefully constructed so that independence is preserved from one iteration to the next. Particularly, it  satisfies the following properties. 
\begin{lemma}\label{lem:exploration}
 \begin{enumerate}[(i)]
     \item If $U_{x_ty_t}\in E_{t-1}$ in step (2), then only $x_t$ can have been involved in a previous failed F-check.      
     \item If at exploration stage $t$ an edge is tagged, i.e.\ if \[\sum_{z\in \mathcal{N}_{\Delta}(y_t)}\1\{V_{y_t z}^{y_t}\leq 1-q\}=0\] in the S-check, then the exploration has already found all possible neighbours of $y_t$ in $H_n$ prior to stage $t$.
    \item The additional edge marks revealed during the F-check at some stage $t$ cannot have been encountered at any stage $0\leq s<t.$
    \item The auxiliary edge marks revealed during the S-check at some stage $t$ cannot have been encountered at any stage $0\leq s<t.$
    \item If $(e_t,U_{e_t})\in E_{t-1}$ in step (2), then $e_t\notin \Delta$.
 \end{enumerate}   
\end{lemma}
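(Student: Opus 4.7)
My plan is to prove the five claims by carefully bookkeeping when edge marks are revealed during the algorithm. The key structural observation is that $U$-marks are revealed only either in step (3) (for the active edge) or in step (F.b) of an F-check, while an auxiliary mark $V^{x}_{xy}$ is revealed only in step (S.b/c) of an S-check in which $x = y_s$ is the new vertex of some stage $s$.

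For (i) and (v), I would argue as follows. If $(e_t, U_{e_t}) \in E_{t-1}$ then by step (F.e), $U_{e_t}$ must have been added during an F-check at some earlier stage $s<t$; and since (F.e) only stores marks of $G$-open edges, the F-check at stage $s$ must have \emph{failed}. By step (4.a), this forces the new vertex $y_s$ into $A_s \subseteq A_{t-1}$. Since $y_s \in \{x_t, y_t\}$ (it is an endpoint of $e_t$) and $y_t \notin A_{t-1}$, necessarily $y_s = x_t$, proving (i). Claim (v) is then immediate since F-checks touch only edges in $\Delta^\mathsf{c}$.

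Next I would address (iii) and (iv) by contradiction. For (iii), suppose that $U_{y_tz}$ for some $z \in \mathcal{N}_{\Delta^\mathsf{c}}(y_t)$ had already been revealed at some stage $s < t$. If this revelation occurred in step (3), then $y_tz$ would have been removed from $L_s$ by (P.c), contradicting $y_tz \in L_{t-1}$. If it occurred in an F-check, then $y_s \in \{y_t, z\}$; each of the four resulting subcases (according to whether $U_{y_tz} \leq J_{y_tz}$ and which endpoint is $y_s$) leads via the algorithm's response -- moving $y_s$ into $A_s$ after failure (step (4.a)) or removing $y_tz$ from $L_s$ after success (step (F.d)) -- to a contradiction with one of $y_t \notin A_{t-1}$, $z \notin A_{t-1}$, or $y_tz \in L_{t-1}$. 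The proof of (iv) is more direct: the superscript $y_t$ in $V^{y_t}_{y_tz}$ forces any earlier S-check revealing it to satisfy $y_s = y_t$, and then either outcome of that earlier S-check contradicts $y_t$'s status at stage $t$ -- tagging removes every edge incident to $y_t$ from $L$ (so $y_t$ cannot be the new vertex of a later stage), while remaining untagged places $y_t$ in $A_s \subseteq A_{t-1}$.

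Claim (ii) is the subtlest. I would combine the F-check passing at stage $t$, which by (F.c)--(F.d) forces every unexplored $y_tz$ with $y_tz \notin \Delta$ and $z \notin A_{t-1}$ to be closed in $G$, with the tagging condition $V^{y_t}_{y_tz} > 1-q$ for all $z \in \mathcal{N}_\Delta(y_t)$. Interpreting each $V^{y_t}_{y_tz}$ as an independent $\operatorname{Uniform}(0,1)$ sample that locally plays the role of $W_{y_tz}$ in determining the $H_n$-status of $y_tz$ from $y_t$'s perspective -- a substitution made legitimate by the independence structure established in (iv) -- one concludes that no unexplored vertex outside $A_{t-1}$ is an $H_n$-neighbour of $y_t$, so all potential $H_n$-neighbours of $y_t$ must lie in $A_{t-1}$. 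The main obstacle will be justifying the $V$-for-$W$ substitution cleanly; once it is in place, (ii) is a direct consequence of the two checks.
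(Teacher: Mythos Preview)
For claims (i), (iii), (iv), and (v) your arguments are essentially the paper's, written out with a bit more case analysis. One small technical caveat: the inclusion $A_s\subseteq A_{t-1}$ that you invoke is not literally true, since step~(P.a) can move vertices from $A$ to $B$; but the specific vertex $y_s$ you need still has an incident edge $e_t\in L_{t-1}$, so it cannot have been removed via~(P.a), and the conclusion $y_s\in A_{t-1}$ stands. The paper glosses over this point as well.

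For (ii), your proposed ``$V$-for-$W$ substitution'' is a detour and is not the mechanism at work. The paper's argument is direct: since the F-check was passed, all unexplored $\Delta^{\mathsf c}$-edges out of $y_t$ are closed in $G$; and upon tagging, step~(S.b) \emph{removes all edges incident to $y_t$ from $L_t$}. Hence no edge at $y_t$ can be explored at any later stage, so $y_t$'s $H_n$-neighbourhood is frozen at stage~$t$. You actually invoke exactly this edge-removal in your proof of~(iv), so you already have the needed ingredient. There is no need to interpret $V^{y_t}_{y_tz}$ as a surrogate for $W_{y_tz}$: the $V$-marks serve only to randomise the tagging decision, while the blocking of future $H_n$-edges at $y_t$ is effected by the removal in~(S.b), not by any $H_n$-status determination via $V$. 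Drop the substitution idea and your proof of (ii) collapses to two lines.
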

\begin{proof}
\begin{enumerate}[(i)]
    \item Suppose $U_{x_ty_t}\in E_{t-1}$ in step (2). Then, for some $s<t$, $e_s=x_sy_s$ and an F-check was performed, involving $e_t=y_sz$ for some $z \in \mathcal{N}_{\Delta^\mathsf{c}}(y_s)$. If the F-check failed, then in Step (4), one would include $y_s$ in $A_s$. Since $y_t \notin A_t$, by the rules of the exploration algorithm, necessarily $y_s=x_t$.  
    \item Because the step (S) is only performed if $e_t=x_ty_t$ has passed (F), the only non-revealed neighbours of $y_t$ are reached through $\Delta$. Note that $H_n$-neighbours in $A_{t-1}$ and $B_{t-1}$ are already known (in fact there cannot be any in $B_{t-1}$). All possible future connections outside $A_{t-1}$ are discarded through step (S) itself.
    \item Assume the opposite. Firstly, neither $y_t$ or $z$ could have become active before, according to the rules of the algorithm. Secondly, if either $y_t$ or $z$ had been involved in a previous F-check, then this check would necessarily have failed. But then the involved endpoint $y_t$ or $z$ would have been activated during step (4), which produces the same contradiction.
    \item This is similar to (iii): the additional marks revealed in (S) could only have been encountered before, if $y_t$ had been involved in a previous S-check. But then $y_t$ would have become either activated or boundary, which is not possible.
    \item This follows from only edges in $\Delta^\mathsf{c}$ being recorded in $E_t$ during step (F).
\end{enumerate}
\end{proof}

\subsection{Coupling}\label{sec:couple} We next show that the way in which the above exploration algorithm uncovers the random graphs allows us to establish the desired coupling. Recall that if $J<J'$ there is always a \emph{canonical coupling} between $G_J$ and $G_{J'}$ such that under the coupling $G_J$ is a subgraph of $G_{J'}.$  

\begin{prop}\label{prop:coupling}
Consider $J\in \mathscr{J}(\Gamma,\mathsf{S})$ and $J'\in \mathscr{J}_{<1}(\Gamma,\mathsf{S})$ such that $J'<J$. Set $\Delta = \textup{supp}(J-J') \subset \Gamma$, assumed to be finite. 
Then there is $p=p(J,J') \in (0,1)$ and a coupling 
of $G_{J'},G_{J}$ and $G_{pJ}$ under which, for each $N \in \N$, 
\begin{equation}\label{eq:conditionP2}
o\overset{G_{pJ}}{\not\leftrightarrow} B_\Gamma(o,N)^c \text{ implies } C'_o\subset \big\{B_{G_J}(C_o,1)\big\},
\end{equation}
where $C_o$ denotes the cluster containing $o$ in $G_{pJ}$, $C'_o$ denotes the cluster of $o$ in $G_{J'}$ and $B_{G_J}(C_o,1)$ denotes the subgraph of $G_J$ obtained from $C_o$ viewed as a subgraph of $G_J$ under the canonical coupling together with all edges emanating from $C_o$ in $G_J$.
\end{prop}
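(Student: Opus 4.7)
The plan is to run the exploration algorithm of Section~\ref{sec:algo} with $n = N$, under a coupling of $G_J$, $G_{J'}$ and $G_{pJ}$ on $\bar\Omega$ designed so that the algorithm's reveals double as a witness for the desired inclusion. Since $\Delta$ is finite and $J'_e < J_e$ on $\Delta$, first choose $q = q(J, J') \in (0, 1)$ small enough that, under the coupling constructed below, an affirmative S-check at any vertex $y_t$ forces every $\Delta$-edge from $y_t$ to $\mathcal{N}_\Delta(y_t)$ out of $G_{J'}$ -- a sufficient condition is $1 - q \ge \max_{e \in \Delta} J'_e / J_e$ -- and set $p = 1 - q \in (0,1)$. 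Define $G_J$ by $\{U_e \le J_e\}$; couple $G_{J'}$ to agree with $G_J$ off $\Delta$ and, on $\Delta$, use the auxiliary marks $V^x_e, V^y_e$ with thresholds chosen so that the marginal is $\mathrm{Bernoulli}(J'_e)$ and $V^{y_t}_{y_t z} > 1 - q$ implies $y_t z \notin G_{J'}$; couple $G_{pJ}$ via $\{U_e \le J_e\}$ together with a $W$-mark check at threshold $p$ (extended to non-$\Delta$ edges by fresh independent uniforms), giving marginal $\mathrm{Bernoulli}(pJ_e)$ and the canonical inclusion $G_{pJ} \subseteq G_J$.

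Next, run the algorithm with parameters $(J, \Delta, q, N)$. The target on the event $\{o \overset{G_{pJ}}{\not\leftrightarrow} B_\Gamma(o, N)^c\}$ is the pair of inclusions $C'_o \subseteq A_\infty \cup B_\infty \subseteq B_{G_J}(C_o, 1)$. The first inclusion is obtained by induction along any $G_{J'}$-path leaving $o$: whenever the path would exit the explored set $A_\infty \cup B_\infty$, the exiting edge has been removed from $L$ either because it is closed in $G$ (F-check), because it is a $\Delta$-edge with $V^{y_t}_{y_t z} > 1-q$ and hence not in $G_{J'}$ by the coupling (S-check), or because it has already been processed; parts (iii)--(v) of Lemma~\ref{lem:exploration} ensure the reveals are self-consistent. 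For the second inclusion, vertices discovered via step (5.a) satisfy $W_{e_t} \le p$ and hence lie in $C_o$ (propagating the $G_{pJ}$-cluster), while vertices added via non-$\Delta$ edges (steps (4.a) and (S.c)) and tagged boundary vertices are at most one $G_J$-step from a vertex already in $C_o$. Good termination of the algorithm is forced by the assumed event, since reaching $\mathcal{T}$ would, via the coupling, trace back to a $G_{pJ}$-path from $o$ to $B_\Gamma(o, N)^c$.

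The main technical obstacle is arranging the $G_{pJ}$-coupling so that successive non-$\Delta$ additions in the exploration cannot chain together to escape a one-step $G_J$-neighborhood of $C_o$. The LSS-flavoured heuristic of Section~\ref{sec:heuristic} suggests this is controllable: the F-check localises the shattering events, and the non-$\Delta$ edge marks entering $H_n$ can be paired with the $W$-thinning used to build $G_{pJ}$ so that every new non-$\Delta$ extension is either witnessed by a $G_{pJ}$-edge or terminates at a tagged boundary vertex. Making this pairing explicit while maintaining independence of the reveals, as guaranteed by Lemma~\ref{lem:exploration}, is the crux of the argument.
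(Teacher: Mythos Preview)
Your sketch correctly identifies the architecture of the argument, but the coupling you propose has a genuine gap that you yourself flag in the final paragraph without resolving. With your choice $p=1-q$, only $\Delta$-edges are tied to $G_{pJ}$ via the $W$-marks; a non-$\Delta$ edge $e_t=x_ty_t$ that enters $H_N$ through step~(4.a) or~(S.c) carries no information about whether it lies in $G_{pJ}$, because the ``fresh independent uniforms'' you use to define $G_{pJ}$ on such edges are decoupled from the exploration. Consequently the claim that vertices added via non-$\Delta$ edges are ``at most one $G_J$-step from a vertex already in $C_o$'' is unjustified: a chain of failed (F)-checks can produce an arbitrarily long path in $H_N$ consisting entirely of non-$\Delta$ edges, none of which need lie in $G_{pJ}$. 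The same defect undermines your termination argument, since reaching $\mathcal{T}$ in $H_N$ does not, under your coupling, force a $G_{pJ}$-path.

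The paper closes this gap by decoupling $p$ from $q$: it keeps $q$ so that $J_e(1-q)^3\ge J'_e$ on $\Delta$, but then takes $1-p$ \emph{much smaller}, namely
\[
1-p \;\le\; q^{\sharp\Delta}\prod_{z\in\Delta^{\mathsf c}}(1-J_{oz}).
\]
The point of this choice is that for a non-$\Delta$ edge $e_t$, the conditional probability (given $\mathcal{F}_{t-1}$) of passing~(F) and then being tagged in~(S) is at least the right-hand side, hence at least $1-p$. Thus for \emph{every} open edge $e_t$ in $H_N$, the event ``$e_t$ is closed in $H_N$ or tagged'' has conditional probability $\ge 1-p$ and can be coupled to the red/black colouring that defines $G_{pJ}$. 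This forces the entire untagged tree of $H_N$ into $C_o$; since tagged vertices are leaves of $H_N$, the inclusion $C'_o\subset H_N\subset B_{G_J}(C_o,1)$ follows. Your condition $1-q\ge\max_{e\in\Delta}J'_e/J_e$ is in the right spirit for the $G_{J'}$-side of the coupling, but setting $p=1-q$ throws away exactly the slack needed to handle the non-$\Delta$ edges.
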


\begin{proof}
Firstly, we claim that there is $p\in (0,1)$ satisfying
\begin{equation}\label{eq:conditionP}1-p \leq\left( \min_{e\in\Delta}\Big\{1-\sqrt[3]{\tfrac{J'_e}{J_e}}\Big\} \right) \wedge \left( \min_{e\in\Delta}\Big\{1-\sqrt[3]{\tfrac{J'_e}{J_e}}\Big\}^{\sharp \Delta}\prod_{z\in\Delta^\mathsf{c}}(1-J_{oz}) \right).
\end{equation}
Indeed, we have that 
\begin{equation}\label{eq:pbound1}
\prod_{z\in\Delta^\mathsf{c}}(1-J_{oz}) = e^{\sum_{z\in\Delta^\mathsf{c}} \ln(1-J_{oz})} = e^{-\sum_{z\in\Delta^\mathsf{c}} \sum_{n\geq 1} J_{oz}^n/n}.
 \end{equation}
Since $a = \max_{z \in \Delta^\mathsf{c}} J_{oz}<1$ and $\sum_{z\in\Delta^\mathsf{c}} J_{oz}<\infty$ we have
\[\sum_{z\in\Delta^\mathsf{c}} \sum_{n\geq 1} J_{oz}^n/n = \sum_{n\geq 1} \sum_{z\in\Delta^\mathsf{c}} J_{oz}^n/n \leq \sum_{n\geq 1} \sum_{z\in\Delta^\mathsf{c}} a^n  \frac{J_{oz}}{a}  \leq \Big(\sum_{z\in\Delta^\mathsf{c}} J_{oz} \Big) \frac{1}{1-a} < \infty.\]
Inserting this bound into \eqref{eq:pbound1} yields $\prod_{z\in\Delta^\mathsf{c}}(1-J_{oz})>0$ so that both terms on the righthand side of \eqref{eq:conditionP} are strictly positive.

Now, to establish the coupling, colour all edges in $\Gamma^{[2]}$ independently either red with probability $1-p$ or black with probability $p$. Clearly, the black cluster containing $o$ in $G_J$ can be viewed as a realisation of the cluster of $o$ in $G_{pJ}$. Let $R=(R_e)_{e\in \Gamma^{[2]}}$ denote the indicator field of the red edges. 
We use the exploration algorithm with $\Delta$ as above and $n=N$, to couple $R$ with the exploration run to uncover $H_N\subset G_J$ in such a way that every tagged edge and every edge found closed in $H_N$ during step (5) is red. For this, let $\mathcal{F}_{t-1}$ denote the filtration generated by the exploration process up to stage $t-1$. 
Furthermore, $H_N$ is constructed such that it is a spanning tree containing a spanning tree of $C'_o$ in the case that the exploration terminates before $\mathcal{T}(G,N)$ is reached. 
For this, we set $q$ equal to the righthand side of \eqref{eq:conditionP}. 
Then the percolation cluster obtain by declaring an edge $e=xy$ open if and only if $U_e<J_e$, $V_{x,y}^x < 1-q$, $W_{xy}<1-q$ and $V_{xy}^y <1-q$ stochastically dominates that of  $C_o'$. Indeed, since these random variables are all independent, for any edge $e=xy$ it holds that
\[ \P \left( U_e<J_e, V_{x,y}^x < 1-q, W_{xy}<1-q, V_{xy}^y <1-q \right) =J_e (1-q)^3 \geq J_{e}'.\]
Particularly, it suffices to show that 
\[ \P(e_t \text{ tagged or closed in }H_N|\mathcal{F}_{t-1})\geq 1-p.\] 
Clearly, if $e_t\in \Delta$ then this is implied by
\[q\geq 1-p,\]
since we may couple $W_{e_t}$ and the independent colouring Bernoulli $R_{e_t}$. It remains to analyse the probability that $e_t$ becomes tagged. For this we note that $e_t$ needs to first pass (F). Given that $e_t$ is present in $U_{e_t}\leq J_{e_t}$, this has conditional probability
\[
\prod_{z\in \mathcal{N}_{\Delta^\mathsf{C}}(y_t)}(1-J_{y_tz})\geq \prod_{z\in\Delta^c}(1-J_{oz}).
\]
Conditionally on the passed (F) check, the tagging probability is
\[
q^{\sharp  \mathcal{N}_{\Delta}(y_t)}\geq q^{\sharp \Delta},
\]
hence the overall probability is at least
\[
q^{\sharp \Delta}\prod_{z\in\Delta^c}(1-J_{oz})\geq 1-p
\]
by construction. Now note that the coupling with $R_{e_t}$ can be achieved, since the involved random edge marks are independent of $\mathcal{F}_{t-1}$ by Lemma~\ref{lem:exploration}: if both $x_t$ and $y_t$ have not been involved in the exploration at any prior stage, then this means that none of the edges in $\N_\Delta(y_t)$ have been previously encountered, hence their occupation status and marks are independent of $\mathcal{F}_{t-1}$. If $e_t$ has been encountered before, then by Lemma~\ref{lem:exploration}(i), this only revealed knowledge about edges adjacent to $x_t$ in $G$. By Lemma~\ref{lem:exploration}(ii), only potential edges $y_tz$ to vertices $z$ for which no path from $o$ in $H_N$ has been uncovered yet are relevant for the tagging, which implies that the corresponding marks revealed in the exploration are independent of $\mathcal{F}_{t-1}$.

If the exploration terminates, before $\mathcal{T}(G,N)$ is reached, then $H_N$ is a spanning tree for $C_o'$ under the coupling. By definition, the tagged edges must end in leaves of $H_N$. Since $C_o$ dominates the untagged part of $H_N$ in the coupling, the assertion
\[
C'_o\subset \big\{B_{G_J}(C_o,1)\big\}
\]
follows.
\end{proof}

\subsection{Derivation of main results}\label{sec:proofs_fin}
We first deduce Theorem~\ref{thm:main2}, since it is used in the other proofs. 
For this, we 
make use of \cite[Theorem 1.1]{duminil-copin_new_2016}. Note that in \cite{duminil-copin_new_2016} a version of long-range percolation is used where the connectivity function is of the form $1-\exp(-\beta \phi(y-x))$ for some $\phi:\Z^d\to[0,\infty]$ and $\beta>0$. It is not difficult to see, that the proofs of \cite{duminil-copin_new_2016} apply in our setup as well. Conversely, as we discuss in detail at the beginning of Section~\ref{sec:conclusion}, our proofs can be adapted to the model of \cite{duminil-copin_new_2016}.

\begin{proof}[Proof of Theorem~\ref{thm:main2}]
 Let $p\in(0,1)$ as in Proposition~\ref{prop:coupling}. Since $J''$ is critical, $G_{pJ''}$ is subcritical by definition. Therefore, as follows by \cite[Theorem 1.1.]{duminil-copin_new_2016}, parts (2),  the cluster under $G_{pJ''}$ has finite susceptibility in the sense that
 \[
	\E\left[\left|\left\{x\in \Gamma: o\overset{G_{pJ''}}{\leftrightarrow}x\right\}\right|\right]<\infty.
	\]
From this, by Proposition \ref{prop:coupling} and since $J$ is summable,  it follows that also the cluster of $G_J$ has finite susceptibility. 
    Consequently, again by \cite[Theorem 1.1.]{duminil-copin_new_2016}, parts (2), there is an $\epsilon>0$ such that also $G_{(1+\epsilon)J}$ has finite susceptibility. Thus, $J$ is subcritical. Further, if $J$ in addition is finitely supported, then the sharpness result \cite[Theorem 1.1.]{duminil-copin_new_2016}, parts (3), apply.
\end{proof}

Utilising Theorem \ref{thm:main2}, we  next prove Theorem \ref{thm:main} and then Theorem \ref{thm:main3}.

\begin{proof}[Proof of Theorem \ref{thm:main}]
It is clear from the definitions that strong criticality implies criticality. Now assume that $J$ is critical. Firstly, let $J'<J$. Then the finiteness of the expected size of the origin cluster in $G_{J'}$ is immediate from Theorem~\ref{thm:main2}. Secondly, assume for contradiction that for $J''>J$ percolation does not occur. Without loss of generality, we may assume that $J''\in \mathscr{J}_{<1}(\Gamma,\mathsf{S})$. Note that $J''$ cannot be supercritical, since we assumed that percolation does not occur. Furthermore, $J''$ cannot be subcritical, since that would contradict criticality of $J$. Hence $J''$ must be critical. But since $J''\in \mathscr{J}_{<1}(\Gamma,\mathsf{S})$ and $J<J''$, Theorem~\ref{thm:main2} implies that $J$ is subcritical, which is a contradiction. Hence percolation must occur for $J''$.
\end{proof}

\begin{proof}[Proof of Theorem \ref{thm:main3}]
If $J'$ is critical, then $J>J'$ cannot be critical as well, since that would contradict Theorem~\ref{thm:main2}. By monotonicity, $J$ cannot be sub-critical either and we conclude that $J$ must be supercritical.
\end{proof}

\section{Extensions and related recent work.}\label{sec:conclusion}

\paragraph{Alternative representation of connection probabilities.}

We may express long-range percolation via a connectivity function $J$ of the form
\[
J_e=1-\textup{e}^{-\varphi_e}, \quad e\in\Gamma^{[2]},
\]
where $\varphi:\Gamma^{[2]}\to[0,\infty].$ This representation is used, for instance, in \cite{duminil-copin_new_2016} and \cite{bäumler2025continuitycriticalvalueshape}.  Note that our definitions for super- and subcriticality are then unnatural and should be replaced by corresponding domination statements for $\varphi$ instead of $J$. 
More specifically, long-range percolation on $\Z^d$ is usually studied for $J^{\beta\phi}_{xy}=1-\exp(-\beta \phi(y-x))$, where $\phi:\Z^d\to[0,\infty]$ is such that $\sum_{z:|z|\geq \ell} \phi(z)<\infty$ for some $\ell \in \N$, and $\beta>0$ serving as an edge density parameter. Given $\phi$, one may define \[\beta_\mathsf{c} \coloneqq \inf\big\{\beta>0: G_{J^{\beta\phi}} \text{ percolates} \big\}\in[0,\infty],\]
and associate the notions of criticality, subcriticality, and supercriticality with the regimes $\beta=\beta_{\mathsf{c}}$, $\beta<\beta_{\mathsf{c}}$, and $\beta>\beta_\mathsf{c},$ respectively.

There are several ways to see that our results remain true in this setting. One argument goes as follows: first we restrict ourselves to the case where $\varphi<\infty$ everywhere. Then $G_J=G_\varphi$ can be dominated in the obvious way by a \emph{multigraph} $\bar{G}_\varphi$ in which for each potential edge $e$ an independent Poisson($\varphi_e$) distributed number of edges are placed. Now observe that independent bond percolation with retention parameter $p$ in this multigraph can then be coupled to the model $G_{p\varphi}$ by Poisson thinning. In particular, our proofs apply under the convention that parallel edges are always explored, checked and tagged simultaneously, since the corresponding correlated thinning is easily seen to always take more edges away than the independent edge thinning with probability $q$ equal to the righthand side of \eqref{eq:conditionP}. If we allow $\varphi=\infty$, then, just as in Theorem~\ref{thm:main}, the additional qualification that the downward perturbation $\varphi'$ is finite everywhere applies.

\paragraph{Long-range percolation on $\Z^d$.}

Proposition 1.10 of \cite{bäumler2025continuitycriticalvalueshape} is the only previous result about strict inequality of critical points in long-range lattice percolation that we are aware of.This previous result is limited to connection functions $J$ on $\Gamma=\Z^d$ that preserve all lattice symmetries, and, more importantly, the use of differential inequalities in its derivation requires that $J$ satisfy the Lipschitz-type condition
\[
0<aJ(z+e)\leq J(z)\leq AJ(z+e)
\]
for some global constants $0<a,A<\infty$ and any nearest neighbour $e$ of $0\in\Z^d$. Theorem~\ref{thm:main3} therefore implies a strengthening of \cite[Theorem 1.9]{bäumler2025continuitycriticalvalueshape}, which transfers results between different notions of supercriticality in the Euclidean setting  $\Gamma=\Z^d$. 
In particular, $\beta>\beta_\mathsf{c}$ is equivalent to supercriticality of $J^{\beta\phi}$ as defined in the previous paragraph. Theorem~\ref{thm:main3} thus enables us to extend many properties of supercritical clusters to the model with connection function 
\[
\tilde{J}=1-\textup{e}^{-\beta_{\mathsf{c}}\phi+f},
\]
where $f>0$ is not necessarily a multiple of $\phi$.  
For instance, this includes, under additional regularity assumptions on $\tilde J$ in each case,
\begin{enumerate}[(i)]
	\item the truncation property, and related continuity and approximation results \cite{Berger2002,bäumler2025continuitycriticalvalueshape};
	\item upper bounds on typical distances \cite{Biskup04,BiskupLin19}, and diameter \cite{Biskup_diam11} in the infinite cluster;
	\item a shape theorem \cite{bäumler2025continuitycriticalvalueshape};
	\item transience of the infinite cluster \cite{Berger2002,bäumler2025continuitycriticalvalueshape}.
\end{enumerate}

\paragraph{Application to nearest neighbour models.}

Let $J^{(d)}$ denote the connectivity function on potential edges of $\Gamma=\Z^d$ that assumes the value $1$ if evaluated at a nearest neighbour edge and $0$ in all other cases. If $d'<d$, then we may view $J^{d'}$ as a connectivity function on $\Z^d$ that is supported on a sub-lattice. In particular, we have $pJ^{d'}<pJ^{d}$ for all $p\in(0,1]$, and therefore obtain the original result of Aizenman and Grimmett \cite{AizGrim91,balister2014essentialenhancementsrevisited} on nearest neighbour percolation as a special case of Theorem~\ref{thm:main2}, namely that the critical probability $p_\mathsf{c}(\Z^d)$ decreases strictly with the dimension $d\geq 1$.

This result holds in much greater generality. For instance, \cite{Martineau19} concluded strict inequalities general covering maps, for both site and bond percolation, under rather mild conditions; see Theorem 2.1 therein.  
Whilst the present paper was written, the work \cite{martineau2025stochasticdominationliftsrandom} appeared,  
providing such strict inequalities via an alternative approach for bond percolation, see Theorem 5.1 therein. 
Interestingly, the approach in \cite{martineau2025stochasticdominationliftsrandom} is based on coupling methods of a similar flavour as our argument, and not differential equations as in \cite{AizGrim91,Martineau19}. 
Nevertheless, the proof in  \cite{martineau2025stochasticdominationliftsrandom}  is different from ours in that they study enhancements, i.e.\ the effect of inserting additional edges to a graph, instead of removing edges as in our Proposition \ref{prop:coupling}. 
Moreover, and in contrast to \cite{martineau2025stochasticdominationliftsrandom}, our main interest is in strict inequalities upon local perturbation of the percolation parameter when the graph $G$ is fixed. 
Therefore, in the same vein as discussed above for the case of $G=\Z^d$, by setting $J'_{e} =0$ for $e\in \Delta$, we in fact directly cover some of these results in the setting of transitive graphs. 

\paragraph{Directed and oriented percolation.} 

The standard essential enhancement method fails for directed and oriented percolation models, as noted in \cite{LimaUngarettiVares2024,Archer2025,Terra2025}. 
However, partly relying on the use of coupling arguments somewhat akin to ours, there has recently been progress on strict inequalities for such models too. 
Particularly, for $G$ a connected graph with bounded degree,  \cite[Theorem 4]{LimaUngarettiVares2024} states that the critical value of directed bond percolation on $G$ is strictly larger than the corresponding one on the so-called ladder graph  $G\times \Z_+$. From this they also concluded that the critical probability for oriented percolation on $\Z^d$ decreases strictly with the dimension, see \cite[Corollary 3]{LimaUngarettiVares2024}. See also \cite[Theorem 5]{LimaUngarettiVares2024}  for a statement involving directed site percolation and \cite{Terra2025} for a related result for oriented bond percolation on $\Z^2$.

We extend and generalize these advances to the setting of long-range bond directed percolation models on a locally finite vertex transitive graph $G$. Indeed, upon minor modifications, the exploration algorithm provided in Section \ref{sec:proof} also applies to this case. That is, consider the independent Bernoulli configuration $\omega$ on $\Gamma^{[2]}$ satisfying
\[ \P\left(\omega_{(x,y)}=1\right) = J_{(x,y)}, \quad x,y \in \Gamma \text{ with }x\neq y,\]
and where now $J_{(x,y)}$ is not necessarily equal to $J_{(y,x)}$. 
This gives a random directed subgraph $G(\omega)$ of $(\Gamma,\Gamma^{[2]})$. 
Then, writing  $\smash{\{ o \overset{G_{J}}{\rightarrow}x\}}$ for  the event $o$ is connected to $x$ using the directed edges of the corresponding random directed subgraph $G_J$, the statement of Proposition \ref{prop:coupling} extends to this setting after replacing $\smash{\{ o \overset{G_{J}}{\leftrightarrow}x\}}$  by $\smash{\{ o \overset{G_{J}}{\rightarrow}x\}}$ wherever relevant. Moreover, as noted \cite[Section 1.2]{duminil-copin_new_2016}, their sharpness results applies also to directed percolation. Therefore, our main results as described in Section \ref{sec:percolation} transfer, yielding e.g.\ strict inequalities at criticality. 
This covers for instance the special case of oriented percolation, or the discrete-time contact process, where $\Gamma=\Gamma'\times \Z$ with $\Gamma'$ a locally finite vertex transitive graph and $J_{(x,n),(y,m)}>0$ may be non-zero only if $m=n+1$, and thereby yield a vast generalisation of \cite[Corollary 3]{LimaUngarettiVares2024} and \cite[Theorem 1]{Terra2025}. 

\paragraph{Beyond transitive graphs. }

The setting of this paper is that of invariant independent percolation on transitive graphs, following \cite{duminil-copin_new_2016}. This covers the main case of interest for us, which is long-range percolation on $\Z^d$ as discussed in the introduction and above, and furthermore a variety of important non-Euclidean examples without adding too much technical overhead.  

We believe that our methods can be adjusted to work in more general settings too, such as on quasi-transitive graphs and for percolation processes that are not completely independent.  For this, note that Proposition~\ref{prop:coupling} in principle still holds if we instead only have a uniform bound of the kind that
	\[\inf_t \P(e_t \text{ tagged in }H_N|\mathcal{F}_{t-1})>0.\]
In particular, the sensitivity of criticality remains valid under this assumptions; only the `sharpness'-type statements that involve further properties of the sub- and supercritical phases require transitivity in as far as they rely on previous work for transitive graphs. 
For instance, the sharpness-results that we apply are known to hold for quasi-transitive graphs, as concluded in \cite[Section 8]{Antunovic2008}.  
However, it seems to us that the present setting illustrates the approach in the clearest possible way and we leave further adaptations to future work.

Similarly, our approach can presumably  also be adapted to continuum percolation models on Poisson processes over homogeneous spaces, at least as long as edges remain independent. In this setting,  sharpness-results for continuum percolation with unbounded range were recently obtained in \cite{Higgs2025}. However, the coupling needed involves a combination of bond and site percolation and is a little more elaborate than the one devised for the present paper.

\noindent\textbf{\large Funding acknowledgement.} CM's research was partially funded by the Deutsche Forschungsgemeinschaft Deutsche Forschungsgemeinschaft (DFG, German Research Foundation) – SPP 2265 443916008.

\section*{References}
\renewcommand*{\bibfont}{\footnotesize}
\printbibliography[heading = none]

\end{document}